\theoremstyle{plain}
\newtheorem{theorem}{Theorem}[section]
\newtheorem{proposition}[theorem]{Proposition}
\newtheorem{corollary}[theorem]{Corollary}
\theoremstyle{definition}
\newtheorem{remark}[theorem]{Remark}
\newcommand{\ua}{\mathord{\uparrow}}
\newcommand{\da}{\mathord{\downarrow}}
\newcommand{\rom}[1]{\rm{\uppercase\expandafter{\romannumeral #1}}}
\newcommand{\set}[2]{\{#1\mid#2\}}
\newcommand{\oneset}[1]{\{#1\}}
\begin{document}

\begin{frontmatter}

%% Title, authors and addresses

%% use the tnoteref command within \title for footnotes;
%% use the tnotetext command for theassociated footnote;
%% use the fnref command within \author or \address for footnotes;
%% use the fntext command for theassociated footnote;
%% use the corref command within \author for corresponding author footnotes;
%% use the cortext command for theassociated footnote;
%% use the ead command for the email address,
%% and the form \ead[url] for the home page:

%% \title{Title\tnoteref{label1}}
%% \tnotetext[label1]{}
%% \author{Name\corref{cor1}\fnref{label2}}
%% \ead{email address}
%% \ead[url]{home page}
%% \fntext[label2]{}
%% \cortext[cor1]{}
%% \address{Address\fnref{label3}}
%% \fntext[label3]{}

\title{Core-compactness of Smyth powerspaces}% \tnoteref{t1}}
%\tnotetext[t1]{This research was partially supported by Labex DigiCosme (project ANR-11-LABEX0045-DIGICOSME) operated by ANR as part of the program ``Investissement d'Avenir" Idex Paris-Saclay (ANR-11-IDEX-0003-02).}

\author[1]{Zhenchao Lyu}%\corref{a1}}
\address[1]{LSV, ENS Paris-Saclay, CNRS, Universit\'e Paris-Saclay, Cachan, 94230, France}
\ead{lyu@lsv.fr}

\author[2]{Xiaodong Jia}
\address[2]{College of Mathematics and Econometrics, Hunan University, Changsha, 410082, China}
\ead{jia.xiaodong@yahoo.com}

\begin{abstract}
We prove that the Smyth powerspace $Q(X)$ of a topological space $X$ is core-compact if and only if $X$ is locally compact. As a straightforward consequence we obtain that the Smyth powerspace construction does not preserve core-compactness generally.\\\\
{\em Keywords}: core-compact; locally compact; Smyth powerspace; prime-continuous\\
2000 MSC: 54B20, 06B35, 06F30
\end{abstract}

\end{frontmatter}

\section{Introduction}

Given a topological space $X$,
the \emph{Smyth powerspace} $Q(X)$ is the set of compact saturated subsets of $X$ with the \emph{upper Vietoris topology}. In domain theory, the Smyth
powerspace coincides with the Smyth powerdomain for continuous domains with the Scott topology, where the latter construction is used in modelling non-deterministic computation, see for example~\cite{smyth83a, goubault15}.  The Smyth powerspace construction has many nice properties and useful applications. For example, it was proved by Schalk~\cite{schalk93a}, Heckmann and Keimel~\cite{heckmann13} that a space is sober if and only if its Smyth powerspace is sober. 
Xu, Xi and Zhao~\cite{xu19} proved that a similar result holds for well-filtered spaces. That is, a space is well-filtered if and only if its Smyth powerspace is well-filtered.  In the same paper,  the Smyth powerspace construction was heavily employed in giving a spatial frame which is not sober in its Scott topology. 

In this note, we consider another important topological property, core-compactness, and investigate whether it can be preserved by the Smyth powerspace construction. A topological space is \emph{core-compact} if and only if the lattice of its open subsets (under set inclusion) is a continuous lattice in the sense of domain theory. Core-compact spaces are of great importance in topology and domain theory since these spaces are precisely exponentiable objects in the category of $T_0$ topological spaces and continuous functions. We prove that for a topological space $X$, its Smyth powerspace $Q(X)$ is core-compact if and only if $Q(X)$ is locally compact if and only if $X$ is locally compact. Since there exists core-compact spaces which are not locally compact~\cite{hofmann78}, it follows that core-compactness cannot be preserved by the Smyth powerspace construction in general.

\section{Preliminaries}
We refer to~\cite{gierz03, abramsky94, goubault13a} for the standard definitions and notations of order theory, topology and domain theory.
For a topological space $X$,  we use $O(X)$ to denote the lattice of open subsets of~$X$.
A topological space is a \emph{c-space} if  for any $x\in X$ and any open neighbourhood $U$ of $x$, there is a point $y\in U$ such that $x\in \text{int}({\uparrow}y)$, where 
the symbol $\ua$ is the saturation operator. For a subset $A$ of space $X$, $\ua A$ is the the intersection of all open neighbourhood of $A$ and called the saturation of $A$. A set~$A$ is called \emph{saturated} if and only if $A=\ua A$. A set $A$ is compact if and only if its saturation $\ua A$ is compact. 
For a topological space $X$, we denote the set of all compact saturated sets of $X$ by $Q(X)$. We
consider the \emph{upper Vietoris topology} on $Q(X)$, generated by the sets $\Box U=\{K\in Q(X): K\subseteq U\}$, where $U$ ranges over the open subsets of $X$. One sees that $\Box U$'s form a base of the upper Vietoris topology since $\Box U\cap \Box V=\Box (U\cap V)$ for open sets~$U, V$. For a compact saturated set $G$, we use $\ua_v G$ to denote the saturation of the singleton $\oneset{G}$ with respect to the upper Vietoris topology on $Q(X)$. Note that $\ua_v  G=\set{K\in Q(X)}{ K\subseteq G }$. 

Let $P$ be a poset and $B$ be a subset of $P$, we say that $B$ is a \emph{basis} of $P$, if $a=\bigvee (\da a \cap B)$ for all $a\in P$, where $\da a$ is the set of all elements that are below $a$. For a subset $A$ of~$P$, we fix $\da A= \bigcup_{a\in A}\da a$. 

Let $L$ be a complete lattice, we define the \emph{way-way-below relation} $\lll$ on $L$ by $x\lll y$ if for any $A\subseteq L$ with $y\leq\bigvee A$, there is $a\in A$ such that $x\leq a$. We call $L$ \emph{prime-continuous} if for any $x\in L$, $x=\bigvee\{y\in L: y\lll x\}$ holds. 

Every prime-continuous complete lattice is a continuous lattice. The following proposition provides a criteria for a continuous lattice to be prime-continuous. 

\begin{proposition}\label{bac}
Let $L$ be a continuous lattice with a basis $B$. If for any $b\in B$ and finite $F\subseteq B$, $b\leq \bigvee F$ implies that $b\in \da F$, then $L$ is prime-continuous. 
\end{proposition}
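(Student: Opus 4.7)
The goal is to show that every $x \in L$ equals $\bigvee \{y \in L : y \lll x\}$. The inequality $\bigvee\{y : y \lll x\} \leq x$ is immediate by taking $A = \oneset{x}$ in the definition of $\lll$. For the reverse, I would first observe that $\lll$ is upward compatible with $\leq$: if $y \lll b$ and $b \leq x$, then any $A \subseteq L$ with $x \leq \bigvee A$ also satisfies $b \leq \bigvee A$, so some $a \in A$ has $y \leq a$, proving $y \lll x$. Since $B$ is a basis of $L$, $x = \bigvee(\da x \cap B)$, so it suffices to prove $b = \bigvee\{y : y \lll b\}$ for each $b \in B$.

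For this I would further reduce the task to showing, for $b \in B$, that every way-below approximant $b' \in B$ with $b' \ll b$ already satisfies $b' \lll b$; combined with the identity $b = \bigvee\{b' \in B : b' \ll b\}$ established below, this yields $b \leq \bigvee\{y : y \lll b\}$. The heart of the proof is thus the claim: \emph{if $b, b' \in B$ and $b' \ll b$, then $b' \lll b$.} To prove it, suppose $b \leq \bigvee A$ for an arbitrary set $A \subseteq L$. Using $a = \bigvee(\da a \cap B)$ for each $a \in A$, put $B_A = \bigcup_{a \in A}(\da a \cap B) \subseteq B$; then $\bigvee B_A = \bigvee A \geq b$. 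The finite joins drawn from $B_A$ form a directed family with supremum $\bigvee B_A$, so $b' \ll b$ yields a finite $F \subseteq B_A$ with $b' \leq \bigvee F$. Now $F \subseteq B$ is finite and $b' \in B$, so the hypothesis of the proposition gives $b' \leq f$ for some $f \in F$; because $f \in B_A$ we have $f \leq a$ for some $a \in A$, whence $b' \leq a$. This establishes $b' \lll b$.

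To close the loop, I still need $b = \bigvee\{b' \in B : b' \ll b\}$ for $b \in B$. Starting from the continuity identity $b = \bigvee\{y : y \ll b\}$, I would fix $y \ll b$, interpolate $y \ll z \ll b$, expand $z = \bigvee(\da z \cap B)$, and use that the finite joins over $\da z \cap B$ form a directed family with supremum $z$ to find a finite $F \subseteq \da z \cap B$ with $y \leq \bigvee F$; each $f \in F$ lies in $B$ and satisfies $f \leq z \ll b$, hence $f \ll b$, giving $y \leq \bigvee\{b' \in B : b' \ll b\}$. Chaining the pieces, for $x \in L$,
\[
x = \bigvee_{b \in B,\, b \leq x} b = \bigvee_{b \in B,\, b \leq x} \bigvee\{b' \in B : b' \ll b\} \leq \bigvee\{y : y \lll x\} \leq x,
\]
so $L$ is prime-continuous. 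The main obstacle is the conversion step inside the central claim: the hypothesis only controls finite joins within $B$, while the definition of $\lll$ demands a single witness from an arbitrary $A \subseteq L$; the bridge is to rewrite the join via the basis and invoke $\ll$ to compress $\bigvee B_A$ into a finite subfamily of $B$ to which the hypothesis applies.
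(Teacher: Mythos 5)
Your proof is correct and follows essentially the same route as the paper: the core step in both is to show that for basis elements the way-below relation already implies the way-way-below relation, by rewriting $\bigvee A$ as the directed supremum of finite joins of elements of $\da A\cap B$ and then applying the hypothesis to the resulting finite $F\subseteq B$. You merely spell out two steps the paper leaves implicit (that $b=\bigvee\{b'\in B: b'\ll b\}$ and the monotonicity of $\lll$ in its second argument), which is harmless.
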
	
\begin{proof}
Give $x\in L$ and $b\in B$, we prove that $b\ll x$ if and only if $b\lll x$. The ``if " direction is obvious. For the converse we assume that $b\ll x$ and let $A$ be any subset of~$L$ with $x\leq \bigvee A$. Since $B$ is a basis of $L$, we know that $\bigvee A = \bigvee (\da A\cap B)$. This means that we can find a finite subset $F$ of $\da A\cap B$ such that $b\leq \bigvee F$ as $b\ll x$. Notice that $b\in B$ and $F\subseteq B$, by assumption there exists an element $f\in F\subseteq \da A\cap B$ such that $b\leq f$. Hence $b$ is below some point of $A$, and this implies that $b\lll x$.
\end{proof}

The following result about c-spaces and prime-continuity is well-known in domain theory, and the proof can be found in~\cite{ho16}, for example. 

\begin{theorem}\label{ptt}
Let $X$ be a topological space.
Then $X$ is a c-space iff $O(X)$ is prime-continuous.	
\end{theorem}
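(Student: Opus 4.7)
The proof breaks into two implications, each leveraging the defining property of one side rather directly.

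For the forward direction, assume $X$ is a c-space. I would show that every open $U$ is the union of opens way-way-below it. Given $x \in U$, the c-space hypothesis produces some $y \in U$ with $x \in \text{int}(\ua y)$. Setting $V_x := \text{int}(\ua y)$, I claim $V_x \lll U$ in $O(X)$. Indeed, if $\{W_i\}_{i \in I}$ is any family of opens with $U \subseteq \bigcup_i W_i$, then $y \in W_j$ for some $j$; since open sets are upper sets in the specialization order, $\ua y \subseteq W_j$, and hence $V_x \subseteq W_j$. Then $U = \bigcup_{x \in U} V_x$ exhibits $U$ as a supremum of elements way-way-below it, so $O(X)$ is prime-continuous.

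For the converse, assume $O(X)$ is prime-continuous. Given $x \in U$, prime-continuity furnishes $V \in O(X)$ with $V \lll U$ and $x \in V$. The crux is to produce $y \in U$ witnessing the c-space property at $x$. I would consider the family $\mathcal{F} = \{W \in O(X) : V \not\subseteq W\}$ and argue that $\bigcup \mathcal{F}$ cannot contain $U$: otherwise $V \lll U$ would force $V \subseteq W$ for some $W \in \mathcal{F}$, contradicting the very definition of $\mathcal{F}$. Hence there exists $y \in U \setminus \bigcup \mathcal{F}$, i.e.\ a point of $U$ such that every open neighbourhood of $y$ contains $V$. Translating this through the specialization order gives $V \subseteq \ua y$; since $V$ is open and contains $x$, this yields $x \in V \subseteq \text{int}(\ua y)$ with $y \in U$, as required.

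The main obstacle I expect is the converse direction, namely extracting the witness $y$. The trick is to look at the union of all opens failing to contain $V$; once one realises that this union must leave a point of $U$ uncovered, everything else is routine translation between the specialization order and the fact that open sets are saturated. No appeal to Proposition \ref{bac} is needed for this theorem, but the style of argument (comparing $\lll$ with $\ll$ via covers versus directed suprema) is clearly in the same spirit.
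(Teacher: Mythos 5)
Your proof is correct: in the forward direction each $\mathrm{int}({\uparrow}y)$ is indeed way-way-below $U$, since any family of opens covering $U$ has a member containing $y$ and hence all of ${\uparrow}y$; and in the converse direction the union of all opens not containing $V$ cannot cover $U$ when $V\lll U$, so the uncovered point $y\in U$ has every neighbourhood containing $V$, giving $x\in V\subseteq\mathrm{int}({\uparrow}y)$. The paper offers no proof of this theorem, deferring to \cite{ho16}; your argument is the standard one, resting on the characterisation that $V\lll U$ in $O(X)$ precisely when $V\subseteq{\uparrow}y$ for some $y\in U$, and it is complete as written.
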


\section{Main results}

We arrive at the main result of this note.

\begin{theorem}
Let $X$ be a topological space. The following statements are equivalent:
\begin{enumerate}
	\item $X$ is locally compact;
	\item $Q(X)$ is a c-space;
	\item $Q(X)$ is locally compact;
	\item $Q(X)$ is core-compact.
\end{enumerate}  	
\end{theorem}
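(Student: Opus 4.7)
The plan is to prove the four-way equivalence by running the cycle $(1)\Rightarrow(2)\Rightarrow(3)\Rightarrow(4)\Rightarrow(1)$. Two of these implications are almost free: $(2)\Rightarrow(3)$ because in a c-space the witness $\ua y$ is the saturation of the compact singleton $\{y\}$ and hence compact, so a c-space is automatically locally compact; and $(3)\Rightarrow(4)$ is the standard fact that local compactness implies core-compactness.

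For $(1)\Rightarrow(2)$ I would argue directly from the definition. Given $K\in Q(X)$ and a basic open $\Box U$ containing $K$, local compactness of $X$ supplies, for each $x\in K$, a compact saturated $Q_x$ with $x\in \mathrm{int}(Q_x)\subseteq Q_x\subseteq U$. Compactness of $K$ yields a finite subcover $\{\mathrm{int}(Q_{x_i})\}_{i=1}^{n}$, and the union $Q:=Q_{x_1}\cup\cdots\cup Q_{x_n}$ is compact saturated with $K\subseteq\mathrm{int}(Q)\subseteq Q\subseteq U$. Hence $K\in \Box(\mathrm{int}(Q))\subseteq \ua_v Q\subseteq \Box U$ and $Q\in\Box U$, verifying the c-space condition at $K$.

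The crux is $(4)\Rightarrow(1)$, which I would route through Proposition~\ref{bac} and Theorem~\ref{ptt}. Assuming $Q(X)$ is core-compact, $O(Q(X))$ is a continuous lattice, and the collection $B:=\{\Box U:U\in O(X)\}$ is a basis of $O(Q(X))$ because it is a base of the upper Vietoris topology. The key step is to verify the primality hypothesis of Proposition~\ref{bac} for $B$: if $\Box U\subseteq \Box V_1\cup\cdots\cup \Box V_n$, then $\Box U\subseteq \Box V_i$ for some $i$. If the conclusion failed, one could pick $x_i\in U\setminus V_i$ for each $i$; then $\ua\{x_1,\ldots,x_n\}$ is a compact saturated subset of $U$ that is contained in no single $V_i$, contradicting $\ua\{x_1,\ldots,x_n\}\in\Box U\subseteq\bigcup_i\Box V_i$. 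Proposition~\ref{bac} then delivers prime-continuity of $O(Q(X))$, and Theorem~\ref{ptt} translates this to $Q(X)$ being a c-space. Finally, given $x\in U$ open in $X$, applying the c-space condition at $\ua x\in\Box U$ produces $Q\in Q(X)$ and a basic open $\Box V$ with $\ua x\in \Box V\subseteq \mathrm{int}(\ua_v Q)\subseteq \ua_v Q\subseteq \Box U$; this forces $x\in V\subseteq Q\subseteq U$ with $V$ open and $Q$ compact saturated, which is precisely local compactness of $X$ at $x$.

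I expect the main obstacle to lie in the primality verification for $B$. It hinges on the small but essential observation that finite unions $\ua\{x_1,\ldots,x_n\}$ of principal saturations are compact saturated in $X$, so they live as explicit witnesses in $Q(X)$ separating $\Box U$ from each $\Box V_i$ whenever $U\not\subseteq V_i$. Once this observation is in place, the remainder of $(4)\Rightarrow(1)$ is a clean application of the preliminaries, and the cycle closes.
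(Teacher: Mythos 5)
Your proof is correct and follows essentially the same route as the paper: the easy implications are handled identically, and the crucial direction $(4)\Rightarrow(2)\Rightarrow(1)$ goes through Proposition~\ref{bac} and Theorem~\ref{ptt} exactly as in the paper, with the same primality check on the basis $\{\Box U\}$. The only (harmless) variation is your witness in that check --- $\ua\{x_1,\dots,x_n\}$ with $x_i\in U\setminus V_i$, using that open sets are saturated, instead of the paper's union of compact saturated sets $K_1\cup K_2$ --- which works just as well.
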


\begin{remark}
 The equivalence between $(1)$ and $(2)$ is folklore among domain theorists and the proof can also be found in \cite{ho18a}.
\end{remark}

\begin{proof}

 $(1)\Rightarrow (2)$: Let $U$ be an open set of~$X$ and $K$ be a compact saturated set in~$\Box U$. This means that $K\subseteq U$. Since $X$ is locally compact and $K$ is compact, we can find an open set $V$ and a compact saturated set G such that $K\subseteq V\subseteq G\subseteq U$.  It follows that $K\in \Box V \subseteq \ua_v G \subseteq \Box U$. This implies that $Q(X)$ is a c-space.

 $(2)\Rightarrow (1)$: For any $x\in X$ and any open neighbourhood $U$ of $x$, it is clear that  ${\uparrow}x\in\Box U$. Since $Q(X)$ is a c-space, there are $K\in Q(X)$ and $V\in O(X)$ such that ${\uparrow}x\in \Box V\subseteq \ua_v K\subseteq\Box U$. It follows that $x\in V\subseteq K\subseteq U$. Therefore $X$ is locally compact. 
 
$(2)\Rightarrow (3)$: Obvious. 
 
$(3)\Rightarrow (4)$: Obvious.
 
 $(4)\Rightarrow(2)$: In light of Theorem~\ref{ptt}, we prove $Q(X)$ is a c-space by showing that its open set lattice $O(Q(X))$ is prime-continuous.  Since $Q(X)$ is core-compact, $O(Q(X))$ is a continuous lattice. Moreover, the set $\set{\Box U}{U\in O(X)}$ is a base of the upper Vietoris topology on $Q(X)$, then it is a basis of the continuous lattice $O(Q(X))$. By Proposition~\ref{bac}, without loss of generality, we only need to prove that $\Box U\subseteq \Box V$ or $\Box U\subseteq \Box W$ whenever $\Box U\subseteq \Box V \cup \Box W$, where $U, V, W$ are opens in $X$. This is just a small variant of~\cite[Lemma 4.2]{goubault11}; we speak the proof in full, nevertheless. Assume this is not true. Then we can find compact sets $K_i \subseteq U, i=1, 2$,  such that $K_1\not\subseteq V$ and $K_2\not\subseteq W$.  So the union $K_1\cup K_2$, which is compact saturated, is not in $\Box V \cup \Box W$. However this is impossible since  $K_1\cup K_2 \subseteq U$ and $\Box U\subseteq \Box V \cup \Box W$.
 \end{proof}
%\textbf{Remark:} The trick being used in ``$(4)$ implies $(2)$" can also be found in Lemma $4.2$ of \cite{goubault11}.\\

The following also appears as Exercise V-5.25 of \cite{gierz03}.
\begin{theorem}{\rm\cite{hofmann78}}
There exists a core-compact topological space which is not locally compact.	
\end{theorem}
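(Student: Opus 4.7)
The plan is to exhibit a $T_0$ topological space $X$ whose open-set lattice $O(X)$ is a continuous lattice (so that $X$ is core-compact) but in which some point lacks a compact neighbourhood inside some open set (so that $X$ is not locally compact). Since the Hofmann--Lawson theorem says that in any \emph{sober} space core-compactness and local compactness coincide, every such counterexample must fail to be sober, and I would accordingly search among non-sober $T_0$ spaces.

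The canonical framework is as follows. Any $T_0$ space $X$ embeds densely into its sobrification $X^s$, and this embedding induces a frame isomorphism $O(X^s) \cong O(X)$. Consequently, $X$ is core-compact precisely when $X^s$ is core-compact; and since $X^s$ is already sober, this happens precisely when $X^s$ is locally compact. The task therefore reduces to finding a sober locally compact space $Y$ together with a proper dense subspace $X \subsetneq Y$ such that the restriction map $O(Y) \to O(X)$ is a frame isomorphism and $X$ itself fails to be locally compact. The sobrification of such an $X$ is then automatically $Y$, and $O(X)$ is automatically continuous.

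Concretely, I would look for $Y$ containing specific ``ideal'' points whose removal destroys compact neighbourhoods near some point $x$ but does not introduce any new opens on the subspace $X$. The construction outlined in Exercise~V-5.25 of \cite{gierz03} performs this balancing act starting from a carefully chosen continuous lattice whose spectrum $Y$ has a dense set of ideal points that are indispensable for compactifying neighbourhoods of certain regular points yet topologically invisible to the frame of opens of $X$. One then verifies core-compactness of $X$ via the induced frame isomorphism, and verifies the failure of local compactness by pinning down a regular point together with an open neighbourhood that no compact saturated subset of $X$ can accommodate.

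The main obstacle is the simultaneous satisfaction of the two requirements: injectivity of restriction $O(Y) \to O(X)$, which demands that the removed points be topologically invisible in $Y$, and the genuine loss of local compactness at some point of $X$, which demands that the removed points be crucial to compactness there. These conditions pull in opposite directions and rule out naive candidates such as $\mathbb{Q} \subseteq \mathbb{R}$ (where $\mathbb{Q}$ is already sober and $O(\mathbb{R}) \not\cong O(\mathbb{Q})$) or spectra of Noetherian rings; navigating this tension requires a careful combinatorial choice of the underlying continuous lattice, and this constitutes the essential difficulty of the theorem.
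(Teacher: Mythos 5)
The paper does not prove this theorem at all: it is imported by citation from \cite{hofmann78}, with a pointer to Exercise~V-5.25 of \cite{gierz03}. So the only question is whether your proposal constitutes a self-contained proof, and it does not. The statement is a pure existence claim, and everything in your write-up up to the point where a witness would have to appear is a (correct and well-organized) reduction: sobrification preserves the open-set lattice, hence core-compactness; the Hofmann--Lawson theorem forces any counterexample to be non-sober; therefore one should look for a locally compact sober $Y$ with a dense proper subspace $X$ on which restriction $O(Y)\to O(X)$ is a frame isomorphism while $X$ loses local compactness. But at the decisive moment you write that ``the construction outlined in Exercise~V-5.25 of \cite{gierz03} performs this balancing act'' --- that is, you cite the very construction whose existence is the content of the theorem. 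You then candidly identify the ``careful combinatorial choice of the underlying continuous lattice'' as the essential difficulty, and that difficulty is exactly what remains undone. A referee would not accept this as a proof of the theorem, only as an accurate description of the shape a proof must take.

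To close the gap you would need to exhibit a concrete $Y$ and $X$: for instance, the classical construction removes from a suitable locally compact sober space a carefully chosen set of points (roughly, points that lie in the closure of enough other points that no open set can distinguish their absence, so $O(Y)\to O(X)$ stays bijective) while ensuring that every compact neighbourhood of some surviving point $x$ inside some open $U$ necessarily contained one of the deleted points, so that $x$ has no compact neighbourhood in $X$ within $U$. Verifying \emph{both} properties for a specific space is the entire labour of the theorem; your two framing observations (injectivity of restriction versus genuine loss of compactness pulling in opposite directions) are the right diagnostics, but stating the tension is not the same as resolving it. As it stands the proposal is a research plan, not a proof.
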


Combining the above theorems, we get our final result.
\begin{corollary}
Let X be a core-compact space but not locally compact. Then Q(X) is not core-compact.	\hfill $\Box$
\end{corollary}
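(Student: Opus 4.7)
The plan is to apply the main theorem directly via its contrapositive, together with the cited result of Hofmann and Lawson. The main theorem establishes the chain of equivalences $(1)\Leftrightarrow(2)\Leftrightarrow(3)\Leftrightarrow(4)$, and in particular the implication $(4)\Rightarrow(1)$ asserts that whenever $Q(X)$ is core-compact, the underlying space $X$ must be locally compact. So under the hypothesis that $X$ is core-compact but not locally compact, $Q(X)$ cannot be core-compact: were it core-compact, $X$ would be locally compact, contradicting the hypothesis. The existence of such an $X$ is guaranteed by the preceding theorem citing \cite{hofmann78}, so the corollary is not vacuous.

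There is no genuine obstacle here, since all the substantive work has been packaged into the main theorem; the hardest step in that theorem, $(4)\Rightarrow(2)$, is where the basis $\{\Box U : U\in O(X)\}$ of $O(Q(X))$ together with Proposition~\ref{bac} and Theorem~\ref{ptt} are used to upgrade a continuous lattice to a prime-continuous one, and this is precisely what rules out the existence of a core-compact $Q(X)$ over a non-locally-compact $X$. The role of the corollary is therefore mainly expository: it records the fact that the Smyth powerspace construction does \emph{not} preserve core-compactness, because the class of spaces $X$ for which $Q(X)$ is core-compact is strictly narrower (the locally compact spaces) than the class of core-compact spaces itself.
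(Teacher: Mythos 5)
Your proof is correct and matches the paper's intent exactly: the corollary follows immediately from the contrapositive of the implication $(4)\Rightarrow(1)$ in the main theorem, which is why the paper marks it with $\Box$ and simply says ``Combining the above theorems.'' Nothing further is needed.
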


\section*{Acknowledgement}
This research was partially supported by Labex DigiCosme (project ANR-11-LABEX0045-DIGICOSME) operated by ANR as part of the program ``Investissement d'Avenir" Idex
Paris-Saclay (ANR-11-IDEX-0003-02). 
We would also like to thank Jean Goubault-Larrecq for helpful discussions and pointing us to the result~\cite[Lemma 4.2]{goubault11}. The second author would like to thank Prof. Qingguo Li for hosting him as a visiting scholar at Hunan University from May to July 2019.


\begin{thebibliography}{10}

\bibitem{abramsky94}
S.~Abramsky and A.~Jung.
\newblock Domain theory.
\newblock In S.~Abramsky, D.~M. Gabbay, and T.~S.~E. Maibaum, editors, {\em
  Semantic Structures}, volume~3 of {\em Handbook of Logic in Computer
  Science}, pages 1--168. Clarendon Press, 1994.

\bibitem{gierz03}
G.~Gierz, K.~H. Hofmann, K.~Keimel, J.~D. Lawson, M.~Mislove, and D.~S. Scott.
\newblock {\em Continuous Lattices and Domains}, volume~93 of {\em Encyclopedia
  of Mathematics and its Applications}.
\newblock Cambridge University Press, 2003.

\bibitem{goubault13a}
J.~Goubault-Larrecq.
\newblock {\em Non-Hausdorff Topology and Domain Theory}, volume~22 of {\em New
  Mathematical Monographs}.
\newblock Cambridge University Press, 2013.

\bibitem{goubault15}
J.~Goubault-Larrecq.
\newblock Full abstraction for non-deterministic and probabilistic extensions
  of~{PCF}~{I} --- the angelic cases.
\newblock {\em Journal of Logic and Algebraic Methods in Programming},
  84(1):155 -- 184, 2015.
\bibitem{goubault11}J. Goubault-Larrecq, K. Keimel. \newblock Choquet-Kendall-Matheron theorems for non-Hausdorff space.
\newblock {\em Mathematical Structures in Computer Science}, 21:511--561, 2011.
\bibitem{heckmann13}
R.~Heckmann and K.~Keimel.
\newblock Quasicontinuous domains and the {S}myth powerdomain.
\newblock In D.~Kozen and M.~Mislove, editors, {\em Proceedings of the 29th
  Conference on the Mathematical Foundations of Programming Semantics}, volume
  298 of {\em Electronic Notes in Theoretical Computer Science}, pages
  215--232. Elsevier Science Publishers {B.V.}, 2013.

\bibitem{ho18a}
W.~Ho.
\newblock A domain-theoretic silk road.
\newblock Talk at An Intersection of Neighbourhoods ``Achim Jung Fest: A
  commemoration in honour of his 60th birthday", September 2018.
  
  \bibitem{ho16}
W.~Ho, A.~Jung, and D.~Zhao.
\newblock  Join-continuity+ hypercontinuity= prime continuity.
\newblock Available at \url{https://arxiv.org/abs/1607.01886}, 2016.
 

\bibitem{hofmann78}
K.~H. Hofmann and J.~Lawson.
\newblock The spectral theory of distributive continuous lattices.
\newblock {\em Transitions of the American Mathematical Society}, 246:285--310,
  1978.

\bibitem{schalk93a}
A.~Schalk.
\newblock {\em Algebras for Generalized Power Constructions}.
\newblock Doctoral thesis, Technische Hochschule Darmstadt, 1993.
\newblock 166~pp.

\bibitem{smyth83a}
M.~B. Smyth.
\newblock Powerdomains and predicate transformers: a topological view.
\newblock In J.~Diaz, editor, {\em Automata, Languages and Programming}, volume
  154 of {\em Lecture Notes in Computer Science}, pages 662--675. Springer
  Verlag, 1983.

\bibitem{xu19}
X.~Xu, X.~Xi, and D.~Zhao.
\newblock A complete Heyting algebra whose scott space is non-sober.
\newblock Available at \url{https://arxiv.org/abs/1903.00615}, 2019.

\end{thebibliography}
\end{document}